\newtheorem{theorem}{Theorem}[section]
\newtheorem{lemma}[theorem]{Lemma}
\newtheorem{definition}[theorem]{Definition}
\newtheorem{remark}[theorem]{Remark}
\newenvironment{proof}{{\par\addvspace{0.1cm}\noindent \bf Proof. }}{\hfill$\Box$\par\medskip} 
\def\n{m}
\def\an{{\alpha-\n}}
\def\a{\alpha}
\def\Om{\Omega}
\def\RR{\mathbb{R}}
\def\interior#1{\stackrel{\circ}{#1}}
\def\d{\delta}
\title{\bf Minimal unfolded regions of a convex hull and parallel bodies}
\author{Jun O'Hara}
\numberwithin{equation}{section}
\begin{document}

\maketitle

\begin{abstract} 
The {\em minimal unfolded region} (or the {\em heart}) of a bounded subset $\Om$ in the Euclidean space is a subset of the convex hull of  $\Om$ the definition of which is based on reflections in hyperplanes. 
It was introduced to restrict the location of the points that give extreme values of certain functions, such as potentials whose kernels are monotone functions of the distance, and solutions of differential equations to which Aleksandrov's reflection principle can be applied. 
We show that the minimal unfolded regions of the convex hull and parallel bodies of $\Om$ are both included in that of $\Om$. 
\end{abstract}

\medskip
{\small {\it Key words and phrases}. Minimal unfolded region, heart, potential, hot spot, convex body, parallel body. }

{\small 2010 {\it Mathematics Subject Classification}: 51M16, 51F99, 31C12, 52A40.}


\section{Introduction}
Let $\Om$ be a compact subset of $\RR^\n$ $(\n\ge2)$. 
The minimal unfolded region of $\Om$ was defined independently by the author for general case (\cite{O1}), and by Brasco, Magnanini, and Salani for the convex case (\cite{BMS}), where it is called the {\em heart} of $\Om$ and denoted by $\heartsuit \Om$. 
The definition is based on the reflections in hyperplanes. Roughly speaking, it is a complement of a region that has no chance to have points where some functions, which can be defined somehow symmetrically, take the extreme values.

\smallskip

We use the following notation. 
Let $S^{\n-1}$ be the unit sphere in $\RR^\n$. 
For a subset $X$ of $\RR^\n$, let $\interior{X},\overline{X}$, and $\textrm{conv}(X)$ be the interior, the closure, the convex hull of $X$, respectively. 
For a unit vector $v\in S^{\n-1}$ and a real number $b$, put $\textrm{cap}^+_{v,b}(X)=X\cap\{x\in\RR^\n\,:\,x\cdot v> b\}$. 
We write $\widetilde X=\textrm{conv}(X)$ and $X^+_{v,b}=\textrm{cap}^+_{v,b}(X)$ when there is no fear of confusion. 
Let $\textrm{R}_{v,b}$ be a reflection of $\RR^\n$ in a hyperplane $\{x\in\RR^\n\,:\,x\cdot v=b\}$. 

\begin{definition}\label{def_folded_core} \rm (\cite{O1},\cite{BMS})  
Let $X$ be a bounded subset of $\RR^\n$ $(\n\ge2)$ and $v\in S^{\n-1}$. 
Define the {\em level of the maximal cap in direction $v$} by 
$$
l_{X}(v)=\inf\{a\,:\,\textrm{R}_{v,b}(X_{v,b}^+)\subset X\>\>\mbox{for every $b\ge a$}\}.
$$
Define the {\em minimal unfolded region} of $X$ by 
$$\textsl{Uf}\,(X)=\bigcap_{v\in S^{\n-1}}\left\{x\in\RR^\n\,:\,x\cdot v\le l_{X}(v)\right\}.$$
\end{definition}

\begin{figure}[htbp]
\begin{center}
\begin{minipage}{.45\linewidth}
\begin{center}
\includegraphics[width=.7\linewidth]{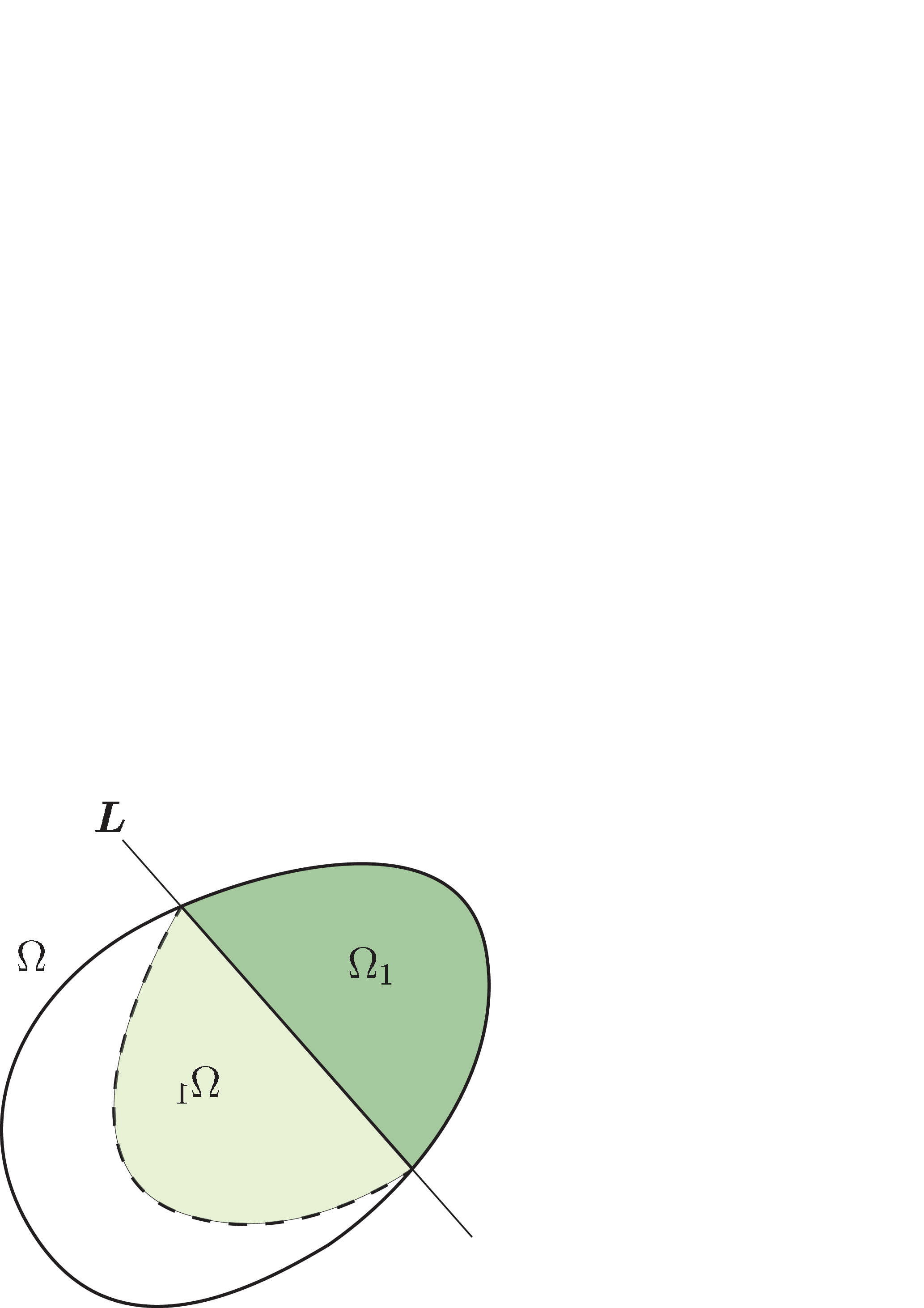}
\caption{Folding a convex set like origami}
\label{origami_convex}
\end{center}
\end{minipage}
\hskip 0.4cm
\begin{minipage}{.45\linewidth}
\begin{center}
\includegraphics[width=\linewidth]{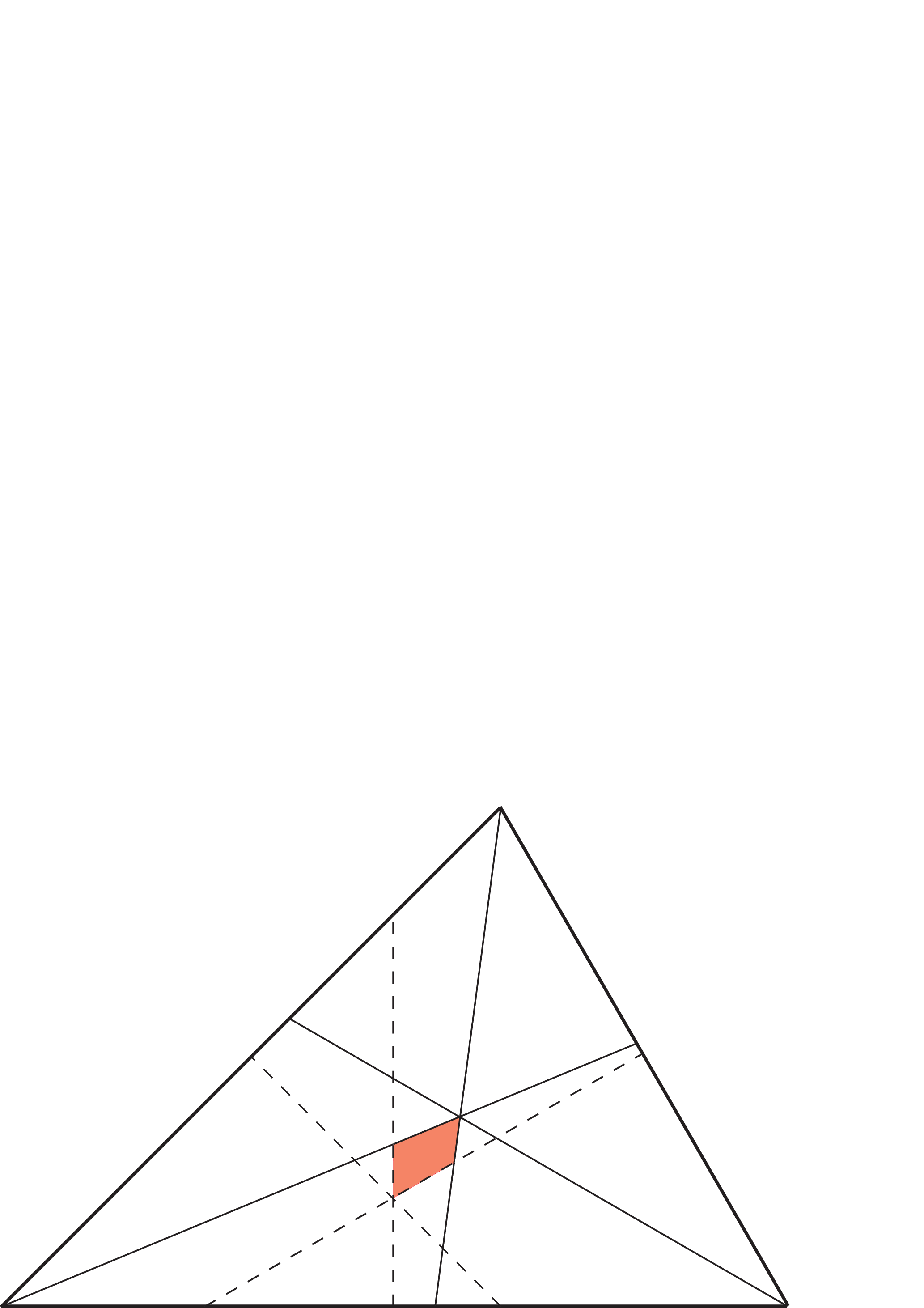}
\caption{A minimal unfolded region of an acute triangle is a quadrilateral formed by two angle bisectors (bold lines) and two perpendicular bisectors (dotted lines). }
\label{folded_core_triangle}
\end{center}
\end{minipage}
\end{center}
\end{figure}

The set $\Om_{v,{l_{\Om}(v)}}^+$ is called the {\em maximal cap} in direction $v$. 
The level $l_\Om(v)$ plays an important role in this article. 
When $\Om$ is compact, it satisfies 
\[
l_\Om(v)=\min\left\{a\,:\,\textrm{R}_{v,b}\Big(\,\overline{\Om_{v,b}^+}\,\Big)\subset\Om\>\>\mbox{for every $b\ge a$}\right\}.
\]

By definition, $\textsl{Uf}\,(\Om)$ is compact and convex. 
It is not empty since it contains the center of mass of $\Om$. 
It is not necessarily contained in $\Om$ if $\Om$ is not convex, as is illustrated in Figure \ref{folded_core_non_convex}. 
However, it is included in the convex hull $\widetilde\Om$ of $\Om$. The reason is as follows. 
Let $M_\Om(v)=\textrm{max}_{x\in\Omega}x\cdot v$ be the support function of $\widetilde\Om$. 
Then $\widetilde\Om=\bigcap_{\,v\in S^{\n-1}}\{x\,:\,x\cdot v\le  M_\Om(v)\}$. Since $l_{\Om}(v)\le M_\Om(v)$ we have $\textsl{Uf}\,(\Om)\subset \widetilde\Om$. 
We remark that $Uf(\Om)$ may not be contained in the interior of $\Om$. For example, the minimal unfolded region of an obtuse triangle has a non-empty intersection with the longest edge, as was mentioned in \cite{BM}. 

If $\Om$ is convex and symmetric in a $q$-dimensional hyperplane $H$ ($q<\n$) then $\textsl{Uf}\,(\Om)$ is included in $H$. 
Especially, the minimal unfolded region of an $\n$-ball consists of the center. 
The convexity assumption cannot be dropped, as is illustrated in Figure \ref{folded_core_non_convex}. Some other properties when $\Om$ is convex can be found in \cite{BM}.

\smallskip
Let us give some examples of functions such that the extreme values are attained only in the minimal unfolded region of $\Om$. 
This follows from the symmetry of the functions. 
%

\smallskip
(1) The potentials whose kernels are monotone functions of the distance. They include, for example, the log potential $\RR^\n\ni x\mapsto\int_\Om\log|x-y|\,dy$, where $dy$ is the standard Lebesgue measure of $\RR^\n$, Riesz-type potential $\int_\Om{|x-y|}^{\an}\,dy$ $(\a>0)$, its generalization to the case when $\a\le0$ and $x\in{\interior\Om}$ 
that can be obtained after renormalization (\cite{O1}), and Poisson integral $\int_\Om t(|x-y|^2+t^2)^{-(\n+1)/2}\,dy$ $(t>0)$. 
We remark that the center of mass is the unique minimum point of $\int_\Om{|x-y|}^{2}\,dy$. 
The existence of maximum points of the renormalization of $\int_\Om{|x-y|}^{\an}\,dy$ $(\alpha\le0)$ restricted to $\interior\Om$ (\cite{O1}) shows $Uf(\Om)\cap \,\interior\Om\ne\emptyset$.  
The properties of the points where the extreme values are attained have been studied by Brasco and Magnanini (\cite{BM}), the author (\cite{O1,O2}), and Sakata (\cite{S,S2}). 
To be precise, the author studied the uniqueness of the extremal point of the Riesz-type potential and its generalization on a general set $\Om$ (with some regularity condition on the boundary), Brasco and Magnanini studied potentials with a monotone kernel when $\Om$ is a convex body, and Sakata gave some sufficient conditions for the uniqueness of the extremal point for reasonably wide class of potentials.

\smallskip
(2) The solutions of differential equations such that one can apply Aleksandrov's reflection principle (or the moving plane method). Here we assume that $\Om$ is a bounded domain in $\RR^\n$ to be compatible with literatures in differential equations. 

The solution $u(x,t)$ of the following heat equation: \setlength\arraycolsep{1pt}
\[
\left\{
\begin{array}{rll}
u_t&=\Delta u &\hspace{0.4cm}\mbox{in $\>\Om\times(0,\infty)$},\\
u&=1 &\hspace{0.4cm}\mbox{on $\>\Om\times\{0\}$},\\
u&=0 &\hspace{0.4cm}\mbox{on $\>\partial\Om\times(0,\infty)$}.\\
\end{array}
\right.
\]
It is the temperature of a heat conductor $\Om$ at time $t$ under the assumption that it is equal to $1$ at time $0$ on $\Om$ and is constantly equal to $0$ on the boundary. 
A point where $u(\,\cdot\,,t)$ attains the maximum is called a {\em hot spot}. 
When $\Om$ is convex, it is known that there is a unique hot spot. 
Brasco, Magnanini, and Salani showed that the hot spot is contained in $Uf(\Om)$ when $\Om$ is convex using Aleksandrov's reflection principle (\cite{BMS}). 

They showed that one can also apply Aleksandrov's reflection principle to the solutions of 
\begin{equation}\label{pde}
\begin{array}{c}
\Delta u+f(u)=0 \>\>\mbox{and $u>0$ in $\Om$},\\
u=0 \>\>\mbox{on $\partial\Om$},
\end{array}
\end{equation}
where $f$ is a locally Lipschitz continuous function, which implies that the maximum points of solutions of \eqref{pde} belong to $Uf(\Om)$. 
A remarkable case is that of the first Dirichlet eigenfunction of the Laplacian, $f(t)=\lambda_1(\Om)t$. \\ \indent
In both cases, since their argument is based on the moving plane method, the same assertion also holds even if $\Om$ is not convex. Namely, hot spots or the maximum points of solutions of \eqref{pde} are contained in $Uf(\Om)\cap \Om$.

\bigskip
Let us assume that $\Om$ is a compact subset of $\RR^\n$ again. 
In this article we give two types of operations that make the domain bigger and the minimal unfolded region smaller. First, the minimal unfolded region of the convex hull of $\Om$ is included in that of $\Om$, as is illustrated in Figures \ref{folded_core_non_convex} and \ref{folded_core_conv_hull}, which was an open problem in \cite{O1}. 
Second, if $\Om_\d$ denote the closure of a $\d$-tubular neighbouhood of $\Om$, 
\begin{equation}\label{eq_def_parallel_body}
\Om_\d
=\{x+\d u\,:\,x\in \Om, u\in B^\n\}=\bigcup_{x\in \Om}B_\d(x),
\end{equation}
where $B^\n$ and $B_\d(x)$ are the unit $\n$-ball and an $\n$-ball with center $x$ and radius $\d$, respectively, 
then the minimal unfolded regions of $\Om_\d$ are included in that of $\Om$ for any $\d>0$. In particular, when $\Om$ is convex, the minimal unfolded region of $\Om_\d$ is equal to that of $\Om$. In this case, $\Om_\d$ is called a {\em $\d$-parallel body} of $\Om_\d$, and is an important notion in convex geometry. 
\begin{figure}[htbp]
\begin{center}
\begin{minipage}{.45\linewidth}
\begin{center}
\includegraphics[width=.7\linewidth]{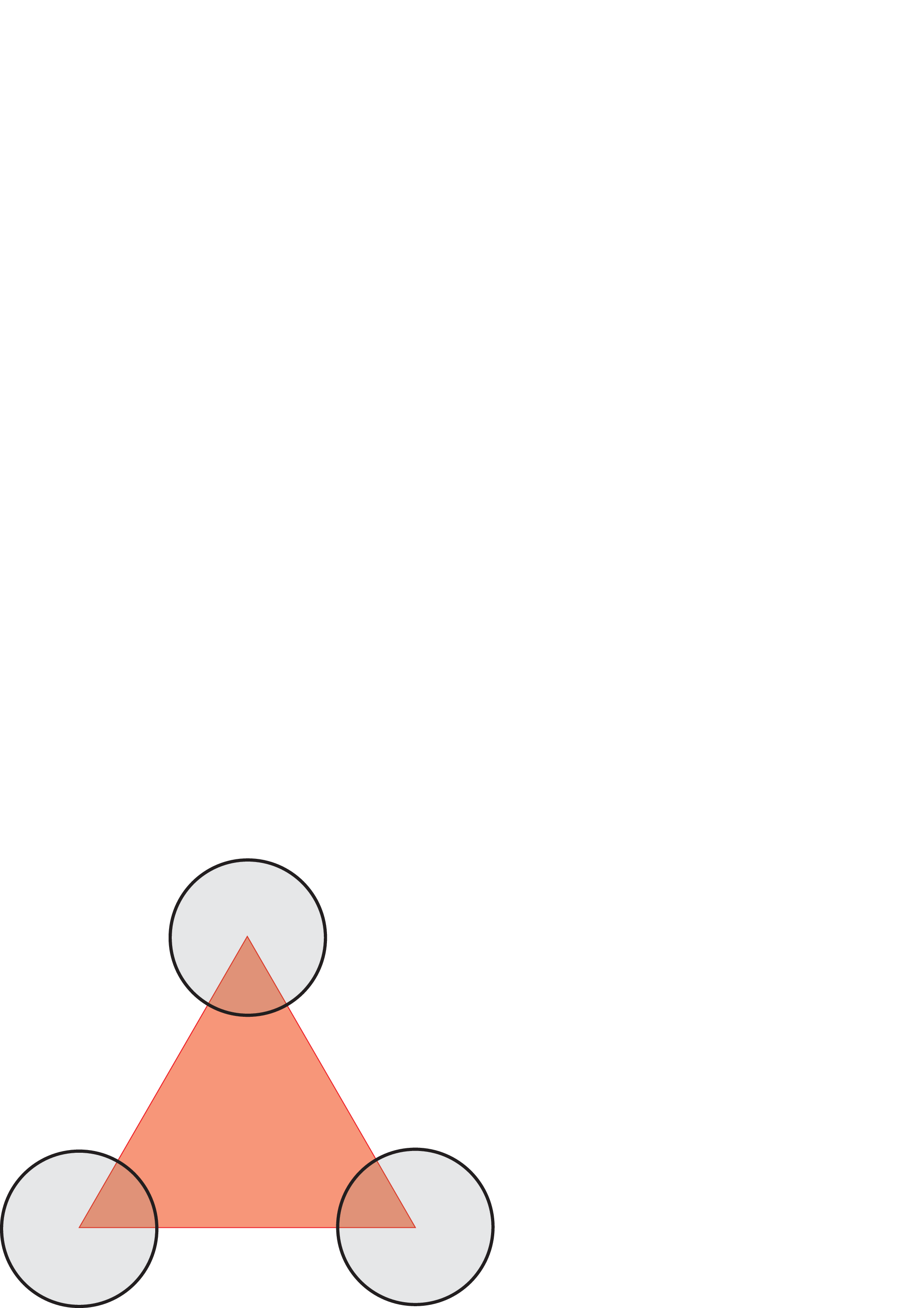}
\caption{The minimal unfolded region (inner regular triangle) of a non-convex set (union of three congruent discs whose centers are located on the vertices of the regular triangle)}
\label{folded_core_non_convex}
\end{center}
\end{minipage}
\hskip 0.4cm
\begin{minipage}{.45\linewidth}
\begin{center}
\includegraphics[width=.7\linewidth]{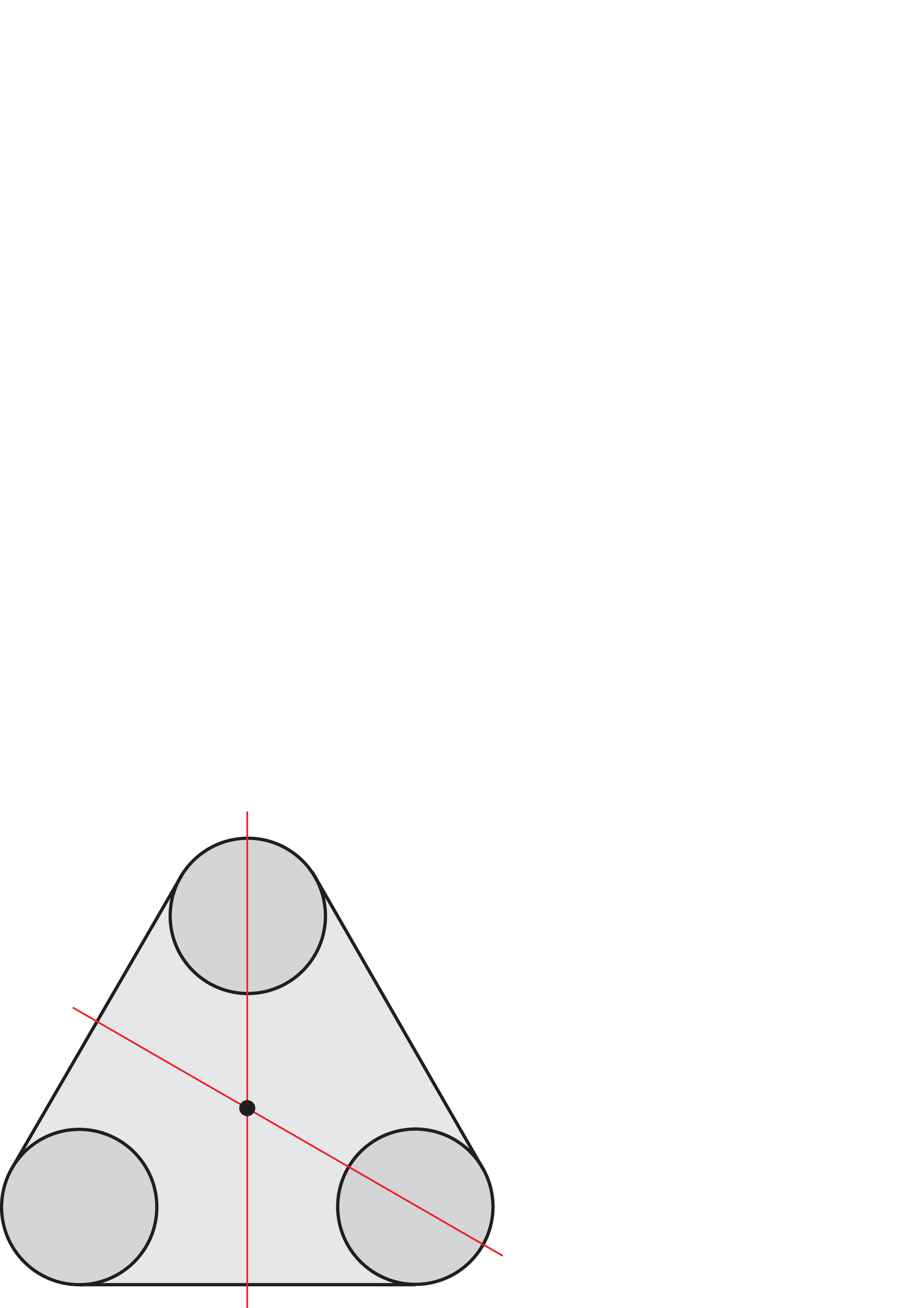}
\caption{The minimal unfolded region of the convex hull consists of a point which is the intersection of lines of symmetry}
\label{folded_core_conv_hull}
\end{center}
\end{minipage}
\end{center}
\end{figure}

\medskip
{\bf Acknowledgment}. 
The author thanks the referee deeply for careful reading and invaluable suggestions. 

\section{Minimal unfolded region of a convex hull}
Let $\overline{xy}$ $(x,y\in\RR^\n)$ denote the line segment joining $x$ and $y$. 
Let $\widetilde\Om$ denote the convex hull of $\Om$ as before. 
We use the same notation as in the previous section. 

\begin{lemma}\label{prop}
Let $v\in S^{\n-1}$. 
We have $b\ge l_{\Om}(v)$ if and only if $\overline{xx'}\subset \Om$ for any $x\in \Om_{v,b}^+$, where $x'=\textrm{\rm R}_{v,b}(x)$. 
\end{lemma}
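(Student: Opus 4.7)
The plan is to reduce both directions of the equivalence to a single geometric observation: the segment $\overline{xx'}$ with $x' = \mathrm{R}_{v,b}(x)$ is swept out by the one-parameter family of reflections $\mathrm{R}_{v,c}(x)$ as the level $c$ varies in $[b,\,x\cdot v]$. Indeed, $\mathrm{R}_{v,c}(x) = x - 2(x\cdot v - c)v$ always lies on the line through $x$ parallel to $v$, and its $v$-component $2c - x\cdot v$ increases monotonically from $x'\cdot v = 2b - x\cdot v$ (at $c=b$) to $x\cdot v$ (at $c = x\cdot v$). Thus $c \mapsto \mathrm{R}_{v,c}(x)$ is a bijective parameterization $[b,\,x\cdot v] \to \overline{xx'}$.

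With this observation in hand, the forward direction is immediate. Assume $b \ge l_\Om(v)$ and take $x \in \Om_{v,b}^+$. For each $c \in [b,\,x\cdot v)$ we have $c \ge l_\Om(v)$ and $x\cdot v > c$, so $x \in \Om_{v,c}^+$; the defining property of $l_\Om(v)$ then gives $\mathrm{R}_{v,c}(x) \in \Om$. Combined with the trivial inclusion $x \in \Om$ (the case $c = x\cdot v$), the parameterization forces $\overline{xx'} \subset \Om$.

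For the backward direction, suppose the segment property holds. Given $b' \ge b$ and $y \in \Om_{v,b'}^+$, we have $y\cdot v > b' \ge b$, so $y \in \Om_{v,b}^+$, and by hypothesis the segment from $y$ to $\mathrm{R}_{v,b}(y)$ lies in $\Om$. Since $b \le b' \le y\cdot v$, the point $\mathrm{R}_{v,b'}(y)$ appears on this segment at parameter $c = b'$, and hence lies in $\Om$. This yields $\mathrm{R}_{v,b'}(\Om_{v,b'}^+) \subset \Om$ for every $b' \ge b$, which is precisely the condition $b \ge l_\Om(v)$.

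I do not expect a serious obstacle: the whole argument is essentially a single bookkeeping check against the parameterization. The only point requiring minor care is the strict inequality in the definition of $\Om_{v,c}^+$, which forces the endpoint $c = x\cdot v$ in the forward step to be handled separately by the trivial observation $x \in \Om$ rather than by the defining property of $l_\Om(v)$.
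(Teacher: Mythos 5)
Your proposal is correct and follows essentially the same route as the paper: both rest on the observation that $\overline{xx'}=\{\mathrm{R}_{v,c}(x)\,:\,b\le c\le x\cdot v\}$ and then rearrange the quantifiers over $x$ and $c$ to match the defining condition for $l_\Om(v)$. The only cosmetic difference is that you split the chain of equivalences into two implications and treat the endpoint $c=x\cdot v$ explicitly, which the paper absorbs into its equivalence chain.
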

\begin{proof}
First observe that $\overline{xx'}=\{\textrm{\rm R}_{v,c}(x)\,:\,b\le c\le x\cdot v\}.$  
Therefore,  $\overline{xx'}\subset \Om$ for any $x\in \Om_{v,b}^+$ if and only if $R_{v,c}(x)\in\Om$ for any $c$ with $b\le c\le x\cdot v$ for any $x\in \Om_{v,b}^+$, which is equivalent to $R_{v,c}(x)\in\Om$ for any $x\in \Om_{v,c}^+$ for any $c\ge b$, which occurs 
if and only if $\textrm{R}_{v,c}(\Om_{v,c}^+)\subset\Om$ for every $c\ge b$, which is equivalent to $b\ge l_{\Om}(v)$. 
\end{proof}

\begin{lemma}\label{key_lemma}
Let 
\[
\widetilde\Om_{v,b}^+=\textrm{\rm cap}^+_{v,b}\left(\textrm{\rm conv}(\Om)\right)
=\textrm{\rm conv}(\Om)\cap\{x\in\RR^\n\,:\,x\cdot v> b\}
\]
for $v\in S^{\n-1}$ and $b\in\RR$. 
Then, for any $z\in\widetilde\Om_{v,{l_{\Om}(v)}}^+$ we have $\overline{zz'}\subset \widetilde\Om$, where $z'=\textrm{\rm R}_{v,{l_{\Om}(v)}}(z)$. 
\end{lemma}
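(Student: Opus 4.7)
Write $b = l_{\Om}(v)$ for brevity. The plan is to realize every point of the segment $\overline{zz'}$ as a convex combination of points of $\Om$, using Lemma \ref{prop} to reflect the extreme points of a Carathéodory representation of $z$ in a coordinated way.

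First, by Carathéodory's theorem, choose $x_1,\dots,x_{\n+1}\in\Om$ and $\lambda_1,\dots,\lambda_{\n+1}\ge0$ with $\sum_i\lambda_i=1$ and $z=\sum_i\lambda_i x_i$. A point of $\overline{zz'}$ has the form $R_{v,c}(z)$ for some $c\in[b,z\cdot v]$, and a direct computation gives
\[
R_{v,c}(z)=z-2(z\cdot v-c)v=\sum_i\lambda_i\bigl(x_i-2(x_i\cdot v-c_i)v\bigr)=\sum_i\lambda_i R_{v,c_i}(x_i),
\]
provided the real numbers $c_i$ satisfy $\sum_i\lambda_i c_i=c$. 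So it suffices to choose such $c_i$ with the additional property that each $R_{v,c_i}(x_i)$ lies in $\Om$.

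The choice is as follows. If $x_i\cdot v\le b$, set $c_i=x_i\cdot v$, so that $R_{v,c_i}(x_i)=x_i\in\Om$. If $x_i\cdot v>b$, leave $c_i$ free inside the interval $[b,\,x_i\cdot v]$; since $b\ge l_{\Om}(v)$, Lemma \ref{prop} says $\overline{x_i\,R_{v,b}(x_i)}\subset\Om$, and $R_{v,c_i}(x_i)$ lies on this segment, so is in $\Om$. It remains only to see that the free $c_i$'s can be arranged to make $\sum_i\lambda_i c_i$ equal to $c$. Letting $I^+=\{i:x_i\cdot v>b\}$, the function $(c_i)_{i\in I^+}\mapsto\sum_i\lambda_i c_i$ is continuous and takes the minimum value $b\sum_{i\in I^+}\lambda_i+\sum_{i\notin I^+}\lambda_i(x_i\cdot v)\le b$ and the maximum value $\sum_i\lambda_i(x_i\cdot v)=z\cdot v$. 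Hence any $c\in[b,z\cdot v]$ is attained, yielding $R_{v,c}(z)\in\widetilde\Om$.

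The only nontrivial step is identifying the correct splitting of indices: capping the indices with $x_i\cdot v\le b$ at their own $v$-coordinate (so no invalid upward reflection is demanded), while letting those with $x_i\cdot v>b$ slide within the range guaranteed by Lemma \ref{prop}. Once this is set up, the existence of admissible $c_i$'s is an intermediate-value argument and the rest is algebraic.
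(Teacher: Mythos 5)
Your proof is correct, but it takes a genuinely different route from the paper's. The paper reduces to $z\in\widetilde\Om_{v,l_{\Om}(v)}^+\setminus\Om_{v,l_{\Om}(v)}^+$, writes $z$ as a point of a chord $\overline{xy}$ with $x,y\in\Om$, and splits into two cases according to whether both endpoints or only one lie in the cap: in the first case it reflects $x$ and $y$ \emph{simultaneously} (setting $x_t=tx+(1-t)x'$, $y_t=ty+(1-t)y'$ and noting $z_t\in\overline{x_ty_t}$), and in the second it extends $\overline{zz'}$ to a longer segment $\overline{zz''}$ that falls under the first case. You instead take a full Carath\'eodory representation $z=\sum_i\lambda_ix_i$ and reflect each $x_i$ by its own amount $c_i$, tuning $\sum_i\lambda_ic_i=c$ by an intermediate-value argument; your convention $c_i=x_i\cdot v$ for indices below the cap absorbs the paper's Case 2, while the constraint $c_i\in[b,x_i\cdot v]$ together with Lemma \ref{prop} keeps every $R_{v,c_i}(x_i)$ in $\Om$, exactly as in the paper's Case 1. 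What your version buys is completeness in one step: the assertion that every $z\in\widetilde\Om\setminus\Om$ lies on a segment $\overline{xy}$ with $x,y\in\Om$ is not literally true for a general compact set in dimension at least $2$ (take $\Om$ to be three non-collinear points and $z$ their centroid), so the paper's two-point reduction implicitly requires an iteration or induction that your Carath\'eodory set-up makes explicit. What the paper's version buys is geometric transparency: the two-point pictures are easy to visualize, and its simultaneous-reflection idea is the same germ that your coordinated choice of the $c_i$ generalizes. All the individual steps of your argument check out (the identity $R_{v,c}(z)=\sum_i\lambda_iR_{v,c_i}(x_i)$ when $\sum_i\lambda_ic_i=c$, the membership $R_{v,c_i}(x_i)\in\Om$ via Lemma \ref{prop}, and the range computation showing the attainable values of $\sum_i\lambda_ic_i$ cover $[b,z\cdot v]$).
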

\begin{proof}
Since Lemma \ref{prop} implies that $\overline{zz'}\subset\Om\subset \widetilde\Om$ for any $z\in\Om_{v,{l_{\Om}(v)}}^+$, we have only to show the assertion when $z\not\in\Om_{v,{l_{\Om}(v)}}^+$. 
Let $z\in\widetilde\Om_{v,{l_{\Om}(v)}}^+\setminus\Om_{v,{l_{\Om}(v)}}^+$. 
Then there are $x,y\in\Om$ such that $z\in\overline{xy}$. 
Put $x'=\textrm{\rm R}_{v,{l_{\Om}(v)}}(x)$ and $y'=\textrm{\rm R}_{v,{l_{\Om}(v)}}(y)$. 
Since $z\cdot v>l_{\Om}(v)$, there are exactly two cases: either both $x$ and $y$ are contained in $\Om_{v,{l_{\Om}(v)}}^+$ or one of them is contained in $\Om_{v,{l_{\Om}(v)}}^+$ and the other not. 

\smallskip
Case 1. Assume both $x$ and $y$ are contained in $\Om_{v,{l_{\Om}(v)}}^+$. 
Any point in $\overline{zz'}$ can be expressed as $z_t=tz+(1-t)z'$ for some $t$ $(0\le t\le 1)$. 
Put $x_t=tx+(1-t)x'$ and $y_t=ty+(1-t)y'$. 
Then $z_t\in\overline{x_ty_t}$ (Figure \ref{Uf_conv_hull-1}). 
Since Lemma \ref{prop} implies $x_t, y_t\in \Om$, we have $z_t\in\widetilde\Om$, which implies $\overline{zz'}\subset \widetilde\Om$. 

\begin{figure}[htbp]
\begin{center}
\begin{minipage}{.5\linewidth}
\begin{center}
\includegraphics[width=\linewidth]{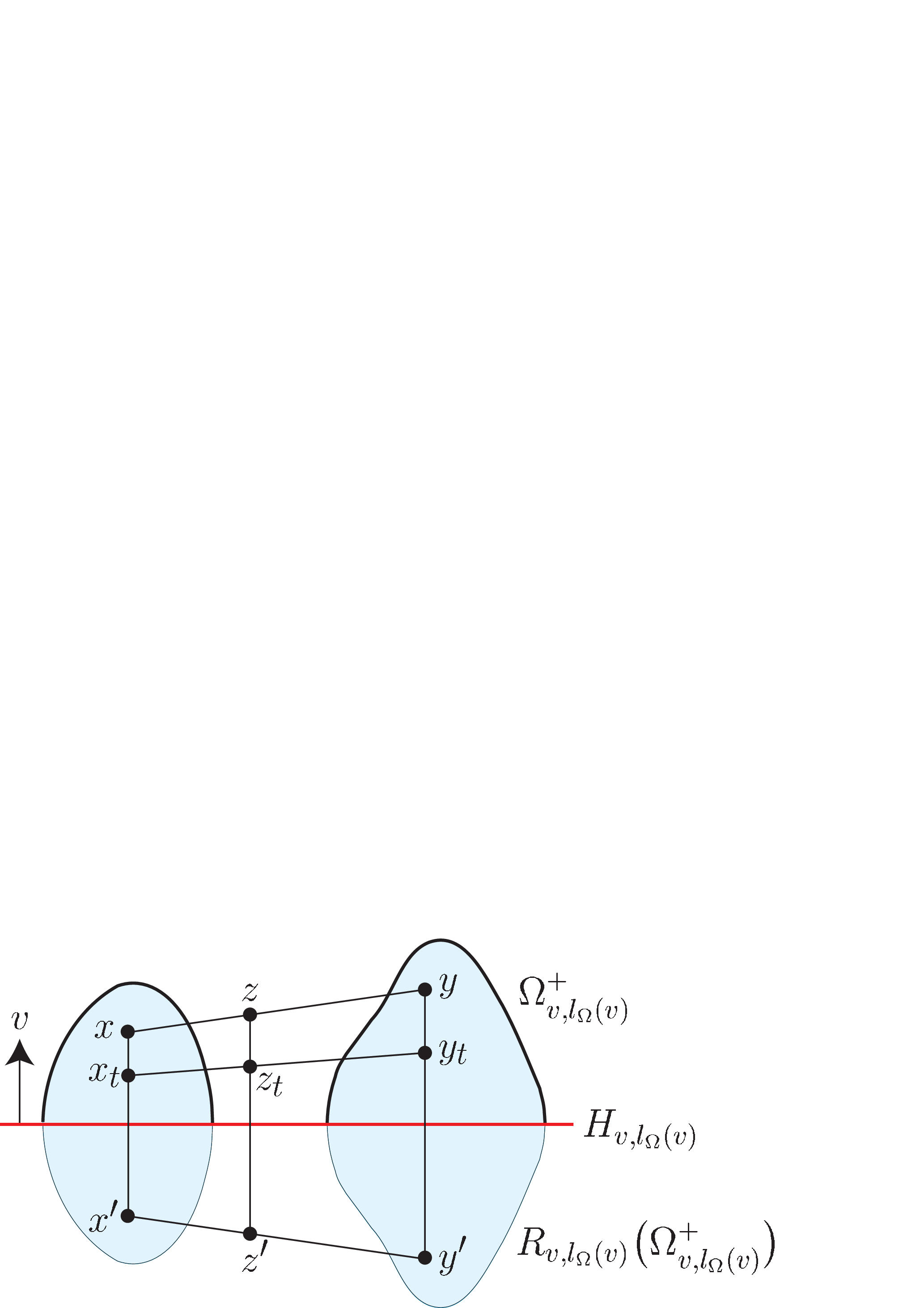}
\caption{}
\label{Uf_conv_hull-1}
\end{center}
\end{minipage}
\hskip 0.1cm
\begin{minipage}{.45\linewidth}
\begin{center}
\includegraphics[width=\linewidth]{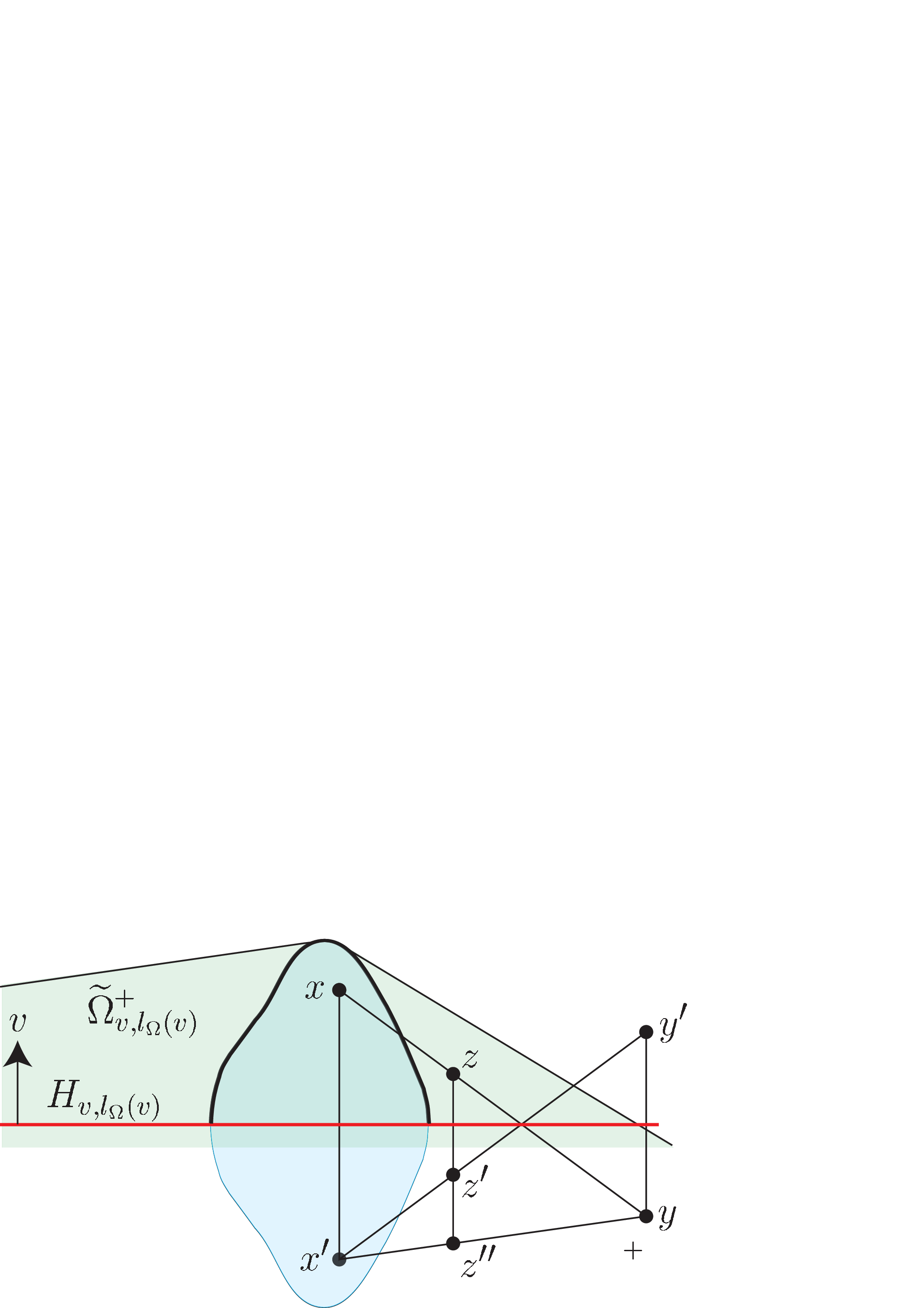}
\caption{}
\label{Uf_conv_hull-2}
\end{center}
\end{minipage}
\end{center}
\end{figure}

\smallskip
Case 2. Assume $x\in\Om_{v,{l_{\Om}(v)}}^+$ and $y\not\in\Om_{v,{l_{\Om}(v)}}^+$. 
Suppose $z$ can be expressed as $z=sx+(1-s)y$ for some $s$ $(0\le s\le 1)$. 
Put $z''=sx'+(1-s)y$. 
Then $\overline{zz'}\subset \overline{zz''}$ (Figure \ref{Uf_conv_hull-2}). 
We have $\overline{zz''}\subset \widetilde\Om$ by the same argument as in the above case, which  implies $\overline{zz'}\subset \widetilde\Om$. 
\end{proof}

\begin{remark}\rm 
Two operations, the convex hull operation $\Om\mapsto \textrm{\rm conv}(\Om)$ and the capping operation $\Om\mapsto \textrm{\rm cap}^+_{v,b}(\Om)$, do not commute in general. \\ \indent
Since $\textrm{\rm conv}\big(\textrm{\rm cap}^+_{v,b}(\Om)\big)\subset \{x\in\RR^\n\,:\,x\cdot v> b\}$ we have 
\[
\textrm{\rm conv}\big(\textrm{\rm cap}^+_{v,b}(\Om)\big) 
= \textrm{\rm cap}^+_{v,b}\left(\textrm{\rm conv}\big(\textrm{\rm cap}^+_{v,b}(\Om)\big)\right) 
\]
which implies 
\[
\textrm{\rm conv}\big(\textrm{\rm cap}^+_{v,b}(\Om)\big) \subset \textrm{\rm cap}^+_{v,b}\left(\textrm{\rm conv}(\Om)\right), 
\]
whereas the converse does not necessarily hold 
\if0 
For example, if 
\[\Om=\left\{(\xi,\eta)\in\RR^2\,\mbox{\large$:$}\,0\le \xi \le 2, -1\le \eta \le 1 \> (\,0\le \xi \le 1), \, -1\le \eta \le 0 \> (1< \xi \le 2\,)\right\},\]
\[\Om=\left\{(\xi,\eta)\in\RR^2\,\mbox{\large$:$}\,0< \xi < 2, -1< \eta < 1 \> (\,0< \xi < 1), \, -1< \eta < 0 \> (1\le \xi < 2\,)\right\},\] 
then for $v=e_2=(0,1)$ and $0\le b<1$, we have 
\[\begin{array}{rcl}
\textrm{\rm conv}\big(\textrm{\rm cap}^+_{v,b}(\Om)\big) 
&=&[0,1]\times(b,1] \\[2mm]
&\ne& [0,1]\times(b,1]\cup \left\{(\xi,\eta)\,:\,1\le\xi<2-b, \> b<\eta\le 2-\xi \right\}
=\textrm{\rm cap}^+_{v,b}\left(\textrm{\rm conv}(\Om)\right)
\end{array}\]
\[\begin{array}{rcl}
\textrm{\rm conv}\big(\textrm{\rm cap}^+_{v,b}(\Om)\big) 
&=&(0,1)\times(b,1) \\[2mm]
&\ne& (0,1)\times(b,1)\cup \left\{(\xi,\eta)\,:\,1\le\xi<2-b, \> b<\eta< 2-\xi \right\}
=\textrm{\rm cap}^+_{v,b}\left(\textrm{\rm conv}(\Om)\right)
\end{array}\]\fi 
%
(see Figure \ref{conv-cap}).\\ 
\begin{figure}[htbp]
\begin{center}
\includegraphics[width=.7\linewidth]{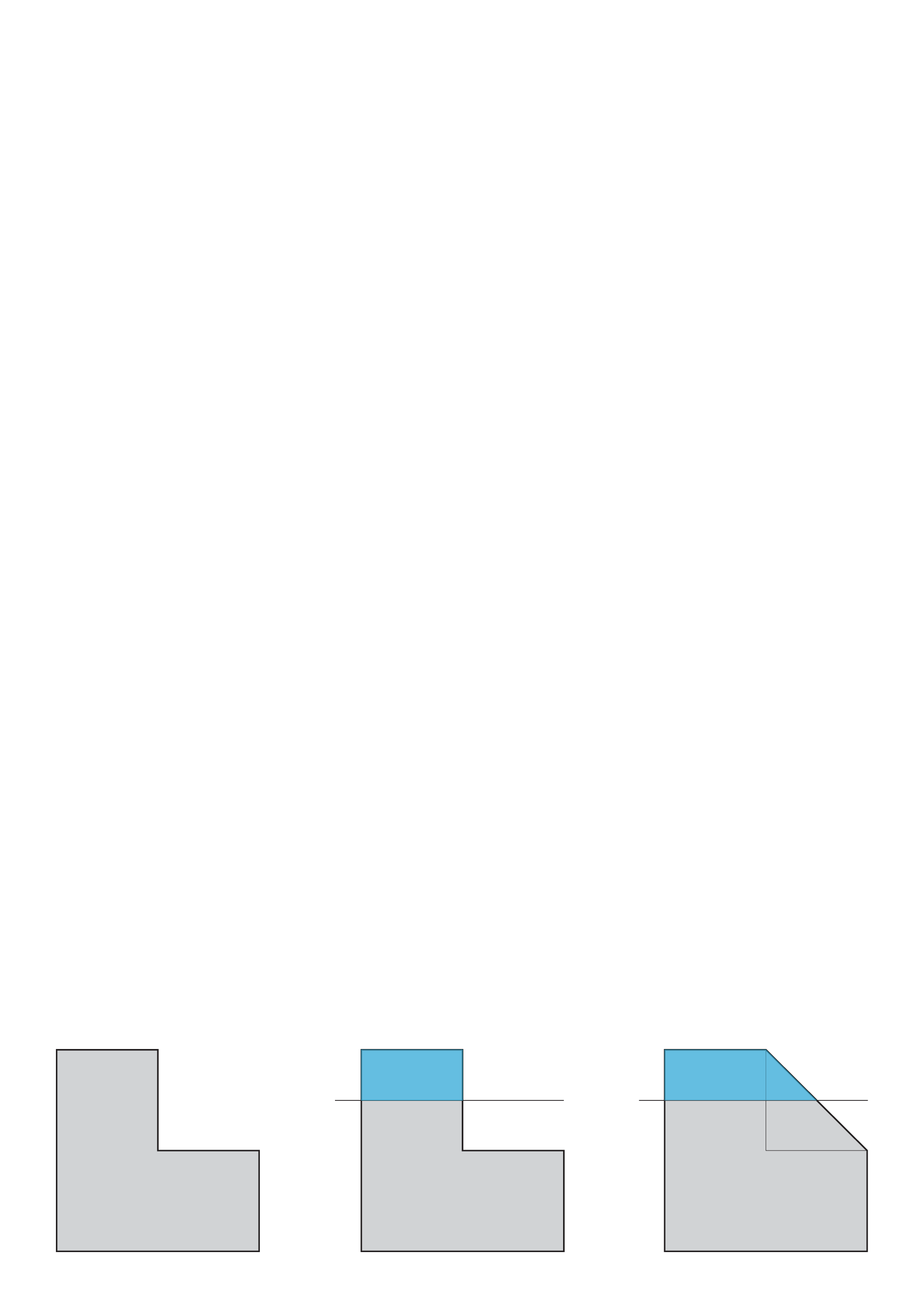}
\caption{$\Om$ (left), $\textrm{\rm conv}\big(\textrm{\rm cap}^+_{v,b}(\Om)\big)$ (center), and $\textrm{\rm cap}^+_{v,b}\left(\textrm{\rm conv}(\Om)\right)$ (right)}
\label{conv-cap}
\end{center}
\end{figure}
\indent
Of course, if $\Om$ is convex then $\textrm{\rm conv}\big(\textrm{\rm cap}^+_{v,b}(\Om)\big) = \textrm{\rm cap}^+_{v,b}\left(\textrm{\rm conv}(\Om)\right)$ for any $v$ and for any $b$. 
We remark that this property does not characterize the convexity, namely, even if $\textrm{\rm conv}\big(\textrm{\rm cap}^+_{v,b}(\Om)\big) = \textrm{\rm cap}^+_{v,b}\left(\textrm{\rm conv}(\Om)\right)$ for any $v$ and for any $b$, $\Om$ is not necessarily convex. 
This can be verified by considering an annulus $A=\{(\xi,\eta)\,:\,1\le \xi^2+\eta^2\le 2\}$. 
There is a counter example even if we restrict to the case when $\Om$ is homeomorphic to an $\n$-ball, for example, we can put $\Om={D}_2\times [0,1]\cup A\times[1,2]\subset\RR^3,$ where ${D}_2$ is a disc with center the origin and radius $2$. 
\end{remark}

\begin{theorem}\label{thm_convex_hull}
The minimal unfolded region of the convex hull of a compact subset $\Om$ is included in that of $\Om$. 
\end{theorem}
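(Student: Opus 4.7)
The plan is to reduce the inclusion $\textsl{Uf}\,(\widetilde\Om)\subset \textsl{Uf}\,(\Om)$ to the pointwise inequality $l_{\widetilde\Om}(v)\le l_\Om(v)$ for every $v\in S^{\n-1}$. Since both minimal unfolded regions are, by definition, intersections of the half-spaces $\{x\in\RR^\n : x\cdot v\le l_X(v)\}$ taken over the same index set $S^{\n-1}$, a pointwise inequality $l_{\widetilde\Om}\le l_\Om$ on $S^{\n-1}$ tightens each defining half-space of $\textsl{Uf}\,(\widetilde\Om)$ compared with that of $\textsl{Uf}\,(\Om)$, giving the desired inclusion.

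Fix $v\in S^{\n-1}$. By the compact-set characterization of $l$ recalled just after Definition \ref{def_folded_core}, the inequality $l_{\widetilde\Om}(v)\le l_\Om(v)$ is equivalent to the assertion
\[
\textrm{R}_{v,b}\!\left(\,\widetilde\Om_{v,b}^+\,\right)\subset \widetilde\Om\qquad\text{for every } b\ge l_\Om(v).
\]
The critical case $b=l_\Om(v)$ is exactly the content of Lemma \ref{key_lemma}: for any $z\in\widetilde\Om_{v,l_\Om(v)}^+$ the full segment $\overline{zz'}$ lies in $\widetilde\Om$, so in particular the endpoint $z'=\textrm{R}_{v,l_\Om(v)}(z)$ lies in $\widetilde\Om$.

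For the range $b>l_\Om(v)$, the cleanest route is to note that the proof of Lemma \ref{key_lemma} uses the hypothesis $b\ge l_\Om(v)$ only through Lemma \ref{prop}, and therefore transcribes verbatim with $l_\Om(v)$ replaced by any such $b$. Alternatively, one can argue by convexity of $\widetilde\Om$: given $z\in\widetilde\Om_{v,b}^+$, both $z$ itself and its reflection $\textrm{R}_{v,l_\Om(v)}(z)$ belong to $\widetilde\Om$ (the latter by the case already handled), and $\textrm{R}_{v,b}(z)$ lies on the segment joining them, so it lies in $\widetilde\Om$ as well.

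I do not foresee a significant obstacle: Lemma \ref{key_lemma} does the essential work, and the theorem reduces to an unpacking of definitions plus the short extension from the critical level to all larger levels. The only point requiring care is being explicit that the definition of $l_X(v)$ as an infimum over $b\ge a$ forces us to verify the reflection condition at every $b\ge l_\Om(v)$ and not merely at the critical value; but either of the two arguments above dispatches this.
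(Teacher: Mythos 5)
Your proposal is correct and follows essentially the same route as the paper: reduce the inclusion to the pointwise inequality $l_{\widetilde\Om}(v)\le l_\Om(v)$, obtain it from Lemma \ref{key_lemma}, and conclude by comparing the defining half-space intersections. Your explicit handling of the levels $b>l_\Om(v)$ is exactly what the paper delegates to Lemma \ref{prop} applied to $\widetilde\Om$ (the segment condition at the critical level already encodes the reflection condition at all higher levels), so no new idea is needed there.
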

\begin{proof}
Lemma \ref{key_lemma} and Lemma \ref{prop} imply that $l_{\widetilde\Om}(v)\le l_{\Om}(v)$ for any $v\in S^{\n-1}$. 
Therefore 
\[
\textsl{Uf}\,(\widetilde\Om)
=\bigcap_{v\in S^{\n-1}}\left\{x\in\RR^\n\,:\,x\cdot v\le l_{\widetilde\Om}(v)\right\}
\subset\bigcap_{v\in S^{\n-1}}\left\{x\in\RR^\n\,:\,x\cdot v\le l_{\Om}(v)\right\}
=\textsl{Uf}\,(\Om).
\]
\end{proof}

\section{Minimal unfolded region of parallel bodies}
A $\d$-parallel body, or its generalization $\Om_\d$ to a non-convex subset given by \eqref{eq_def_parallel_body}, is a natural object not only in convex geometry but also in potential theory from a geometric viewpoint. The first half of the following is an improvement of Lemma 2.2 of \cite{O2}. 

\begin{theorem}\label{thm_parallel_body}
The minimal unfolded region of $\Om_\d$ is included in that of $\Om$ for every $\d>0$. 
Moreover, if $\Om$ is convex then the minimal unfolded region of $\Om_\d$ is same as that of $\Om$ for every $\d>0$. 
\end{theorem}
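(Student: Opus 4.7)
The plan is to mimic the proof of Theorem \ref{thm_convex_hull}, establishing the inequality $l_{\Om_\d}(v)\le l_\Om(v)$ in full generality and the reverse inequality under convexity; the $\textsl{Uf}$ statements then follow since $\textsl{Uf}(X)$ is the intersection of the half-spaces $\{x\cdot v\le l_X(v)\}$. Throughout, the characterization in Lemma \ref{prop} of the condition $b\ge l_\Om(v)$ in terms of the segment $\overline{xx'}$ is the main lever.

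For the first inclusion I would fix $b\ge l_\Om(v)$ and an arbitrary $z\in(\Om_\d)^+_{v,b}$, written as $z=x+\d u$ with $x\in\Om$ and $u\in B^\n$, and aim (as in Lemma \ref{prop}) at $\overline{zz'}\subset\Om_\d$, where $z'=\textrm{R}_{v,b}(z)$. I would split according to whether $x\cdot v>b$. In the first subcase, Lemma \ref{prop} applied to $\Om$ yields $\overline{xx'}\subset\Om$; writing $z_t=(1-t)z+tz'=x_t+\d u_t$ with $x_t=(1-t)x+tx'$ and $u_t=(1-t)u+tR_0(u)$, where $R_0$ denotes the linear part of $\textrm{R}_{v,b}$ (i.e.\ the reflection across $v^\perp$), the observation that $u$ and $R_0(u)$ differ only in the sign of their $v$-component produces $|u_t|\le|u|\le 1$, so that $z_t=x_t+\d u_t\in\Om_\d$. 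In the second subcase, $x$ lies on or below the mirror hyperplane while $z$ lies strictly above, and a direct estimate gives $|z_t-x|\le|z-x|\le\d$ for every $t\in[0,1]$, so $z_t\in B_\d(x)\subset\Om_\d$.

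For the reverse inclusion in the convex case I would argue by contrapositive: if $b<l_\Om(v)$, then there exists $z\in(\Om_\d)^+_{v,b}$ with $\textrm{R}_{v,b}(z)\notin\Om_\d$. Pick $x\in\Om$ with $x\cdot v>b$ and $\textrm{R}_{v,b}(x)\notin\Om$; let $y^\ast$ be the (unique, by convexity) nearest point of $\Om$ to $\textrm{R}_{v,b}(x)$, set $\d_0:=|\textrm{R}_{v,b}(x)-y^\ast|>0$, and $w:=(\textrm{R}_{v,b}(x)-y^\ast)/\d_0$. The crucial geometric fact I would establish first is $w\cdot v<0$, obtained by combining the supporting hyperplane inequality $(x-y^\ast)\cdot w\le 0$ with the identity $x-y^\ast=\d_0 w+2(x\cdot v-b)v$ (which follows from the two expressions $\textrm{R}_{v,b}(x)=y^\ast+\d_0 w=x+2(b-x\cdot v)v$). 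Given this, the candidate $z:=x+\d R_0(w)\in\Om_\d$ satisfies $z\cdot v=x\cdot v-\d(w\cdot v)>b$, and the identity $\textrm{R}_{v,b}(x+\d R_0(w))=\textrm{R}_{v,b}(x)+\d w$ places $\textrm{R}_{v,b}(z)$ in the half-space strictly separated from $\Om$ by the supporting hyperplane at $y^\ast$, at signed distance $\d_0+\d>\d$ from $\Om$; hence $\textrm{R}_{v,b}(z)\notin\Om+\d B^\n=\Om_\d$, and Lemma \ref{prop} yields $b<l_{\Om_\d}(v)$.

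The main obstacle is the inequality $w\cdot v<0$ in the convex step: without it, the perturbation $z=x+\d R_0(w)$ would risk slipping below the mirror hyperplane and failing to lie in the upper cap $(\Om_\d)^+_{v,b}$. The derivation of this sign from the supporting hyperplane is the precise place where convexity of $\Om$ is indispensable, and it explains why equality of unfolded regions need not be expected without it.
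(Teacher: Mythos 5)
Your argument is correct, and both halves check out. The first half is essentially the paper's proof in different clothing: the paper also writes a point of $(\Om_\d)^+_{v,l_\Om(v)}$ as lying in some $B_\d(y)$ with $y\in\Om$ and splits on whether $y\cdot v\le l_\Om(v)$ (reflecting the cap of the ball into the ball itself, which is your estimate $|z_t-x|\le|z-x|\le\d$) or $y\in\Om^+_{v,l_\Om(v)}$ (using convexity of $\bigcup_{0\le t\le1}B_\d(ty'+(1-t)y)$, which is exactly your decomposition $z_t=x_t+\d u_t$ with $|u_t|\le|u|\le1$). The second half is where you genuinely diverge. The paper works at the critical level $b=l_\Om(v)$: it produces boundary points $x$ and $x'=\mathrm{R}_{v,l_\Om(v)}(x)$ of $\Om$, takes a supporting hyperplane at $x'$, invokes the reflection property (citing \cite{BMS}) to see that its mirror image supports $\Om$ at $x$, and pushes out by $\d$ along the common normal to obtain points of $\partial\Om_\d$ witnessing $l_{\Om_\d}(v)\ge l_\Om(v)$. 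You instead argue by contrapositive at every subcritical level $b<l_\Om(v)$, using the nearest-point projection onto $\Om$ and the obtuse-angle criterion to produce a point of $(\Om_\d)^+_{v,b}$ whose reflection lies at distance $\d_0+\d>\d$ from $\Om$; your sign computation $w\cdot v<0$ from the identity $x-y^\ast=\d_0w+2(x\cdot v-b)v$ is correct and is indeed the crux. Your route avoids the two delicate points of the paper's version (existence of an extremal $x$ at the exact level, and the reflected-supporting-hyperplane fact), at the cost of one step you should make explicit: the opening ``pick $x\in\Om$ with $x\cdot v>b$ and $\mathrm{R}_{v,b}(x)\notin\Om$.'' Lemma \ref{prop} only guarantees an $x\in\Om^+_{v,b}$ for which some point $\mathrm{R}_{v,c}(x)$, $b\le c\le x\cdot v$, of the segment escapes $\Om$; you need convexity once more (the trace of $\Om$ on the line $x+\RR v$ is an interval containing $x$) to conclude that the endpoint $\mathrm{R}_{v,b}(x)$ itself lies outside $\Om$. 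With that one line added the proof is complete.
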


\begin{proof}
(1) To prove the first half, we have only to show $l_{\Om_\d}(v)\le l_\Om(v)$ for any $v\in S^{\n-1}$ for every $\d>0$. Fix $\d$ and $v$. Lemma \ref{prop} implies that it is enough to show that for any $x\in(\Om_\d)_{v,l_\Om(v)}^+$ there holds $\overline{xx'}\subset\Om_\d$, where $x'=R_{v,l_\Om(v)}(x)$. By the definition of $\d$-parallel body, there is $y\in\Om$ such that $x\in B_\d(y)$. 

\medskip
Case 1. Suppose $y\cdot v\le l_\Om(v)$. Then, since $\left(B_\d(y)\right)_{v,l_\Om(v)}^+$ is contained in the hemisphere of $B_\d(y)$, we have $R_{v,l_\Om(v)}\big(\left(B_\d(y)\right)_{v,l_\Om(v)}^+\big)\subset B_\d(y)$, which implies $\overline{xx'}\subset B_\d(y)\subset\Om_\d$ (Figure \ref{uf-parallel1}). 

\medskip
Case 2. Suppose $y\in\Om_{v,l_\Om(v)}^+$. Then, by Lemma \ref{prop} we have $\overline{yy'}\subset\Om$, where $y'=R_{v,l_\Om(v)}(y)$, which implies $\big(\,\overline{yy'}\,\big)_\d=\cup_{0\le t\le 1}B_\d(ty'+(1-t)y)\subset\Om_\d$. Since $x'\in B_\d(y')$ and $\big(\,\overline{yy'}\,\big)_\d$ is convex, we have $\overline{xx'}\subset \big(\,\overline{yy'}\,\big)_\d\subset\Om_\d$ (Figure \ref{uf-parallel2}).
\begin{figure}[htbp]
\begin{center}
\begin{minipage}{.49\linewidth}
\begin{center}
\includegraphics[width=\linewidth]{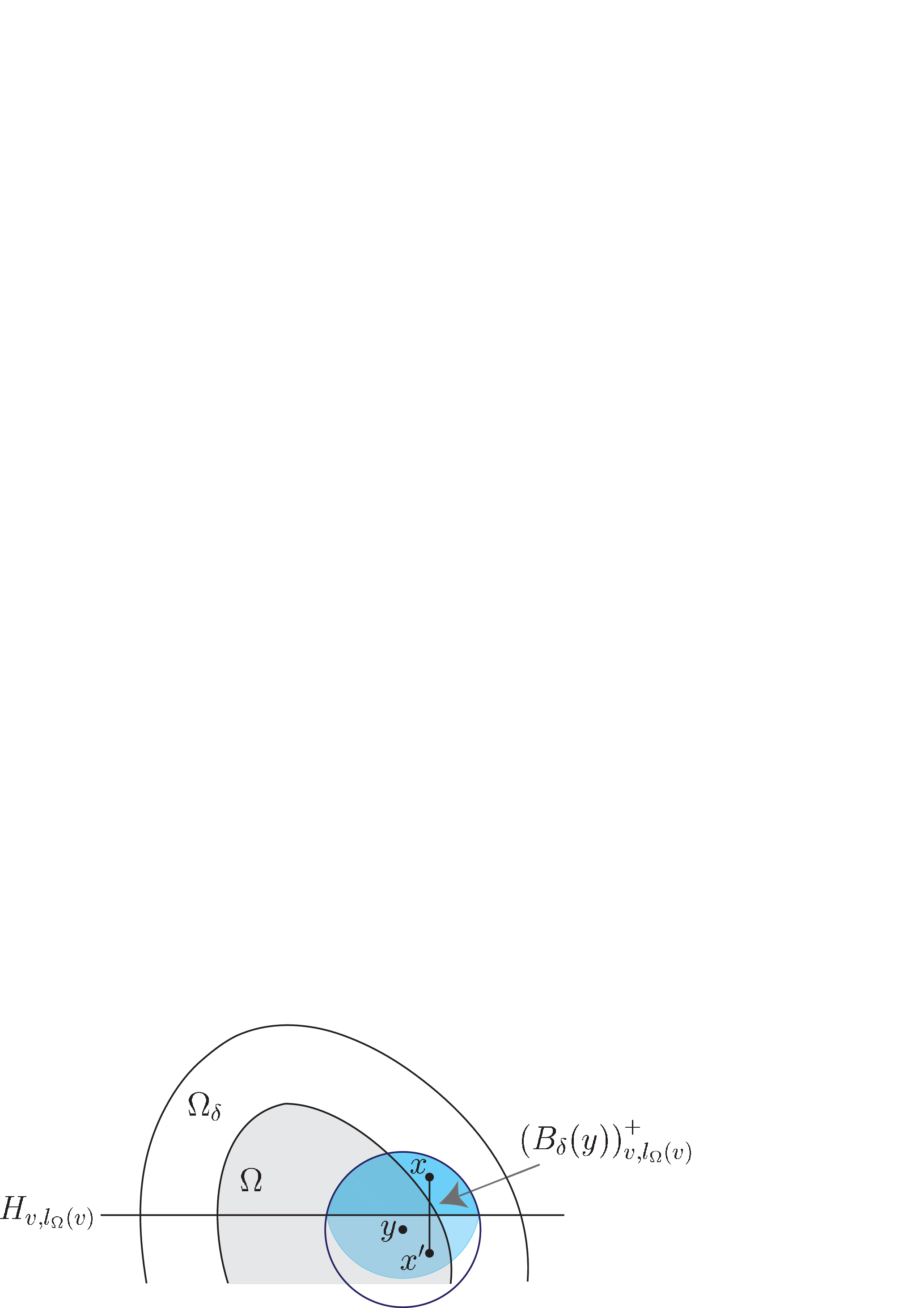}
\caption{}
\label{uf-parallel1}
\end{center}
\end{minipage}
\hskip 0.4cm
\begin{minipage}{.42\linewidth}
\begin{center}
\includegraphics[width=\linewidth]{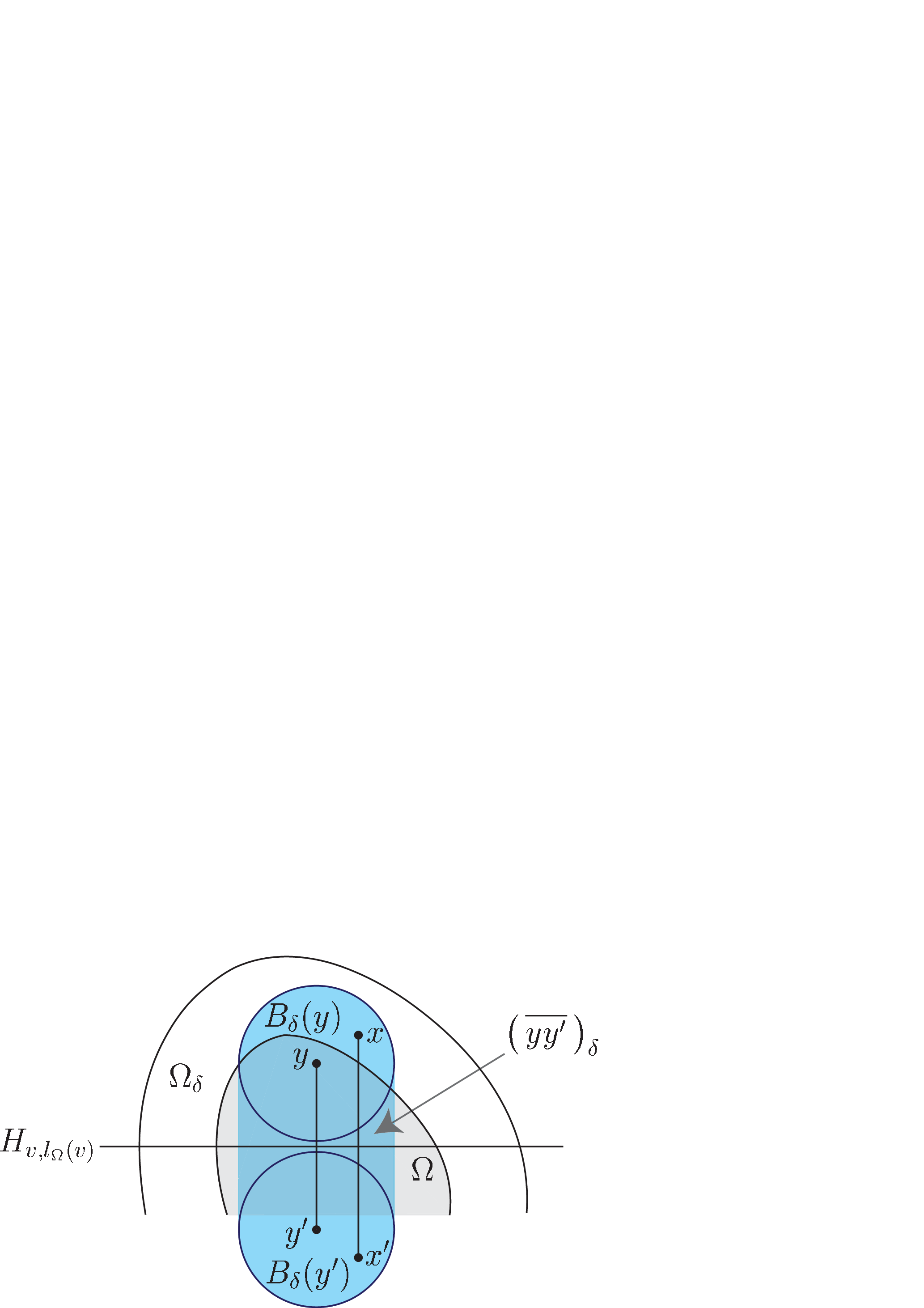}
\caption{}
\label{uf-parallel2}
\end{center}
\end{minipage}
\end{center}
\end{figure}

\smallskip 

(2) Suppose $\Om$ is convex. 
Let $\d>0$ and $v\in S^{\n-1}$. 
Lemma \ref{prop} implies that there is a point $x$ in $\Om_{v,l_\Om(v)}^+$ such that $\overline{xx'}\subset \Om$, where $x'=R_{v,l_\Om(v)}(x)$, and that $\overline{xx''}\not\subset \Om$ if $\overline{xx''}$ is obtained by extending $\overline{xx'}$ to the side of $x'$, i.e. $x''=R_{v,b}(x)$ for some $b<l_\Om(v)$. 
It follows that both $x$ and $x'$ are on the boundary of $\Om$. 
Let $\Pi'$ be a supporting hyperplane of $\Om$ at $x'$, and put $\Pi=R_{v,l_\Om(v)}(\Pi')$. 
In general, $\Pi'$ may not be uniquely determined, nevertheless $\Pi=R_{v,l_\Om(v)}(\Pi')$ is a supporting hyperplane of $\Om$ at $x$, as was pointed out in Theorem 4.12 of \cite{BMS}. This is because 
\[R_{v,l_\Om(v)}\left(\Om\cap\{x\in\RR^\n\,:\,x\cdot v< l_\Om(v)\}\right)\supset \Om\cap\{x\in\RR^\n\,:\,x\cdot v> l_\Om(v)\}.
\]
%
Let $n$ be the ``outer'' unit normal vector to $\Pi$ at $x$. 
Then we have $v\cdot n>0$ since $\overline{xx'}\subset\Om$ and $\overrightarrow{x'x}=cv$ for some positive number $c$. 
Put $y=x+\d n$ and $y'=R_{v,l_\Om(v)}(y)$. Then $B_\d(y)\cap\Om=\{x\}$ and $B_\d(y')\cap\Om=\{x'\}$. 
It implies that if we extend $\overline{yy'}$ to the side of $y'$ to obtain $\overline{yy''}$, i.e. $y''=R_{v,b}(y)$ for some $b<l_\Om(v)$, then $\overline{yy''}\not\subset\Om$, which implies $l_{\Om_\d}(v)\ge l_\Om(v)$ by Lemma \ref{prop}. 
Since we have $l_{\Om_\d}(v)\le l_\Om(v)$ as we saw in the proof of (1), we have $l_{\Om_\d}(v)= l_\Om(v)$. Since $v$ was arbitrary, we have $Uf(\Om_\d)=Uf(\Om)$. 
\end{proof}

\begin{figure}[htbp]
\begin{center}
\begin{minipage}{.45\linewidth}
\begin{center}
\includegraphics[width=\linewidth]{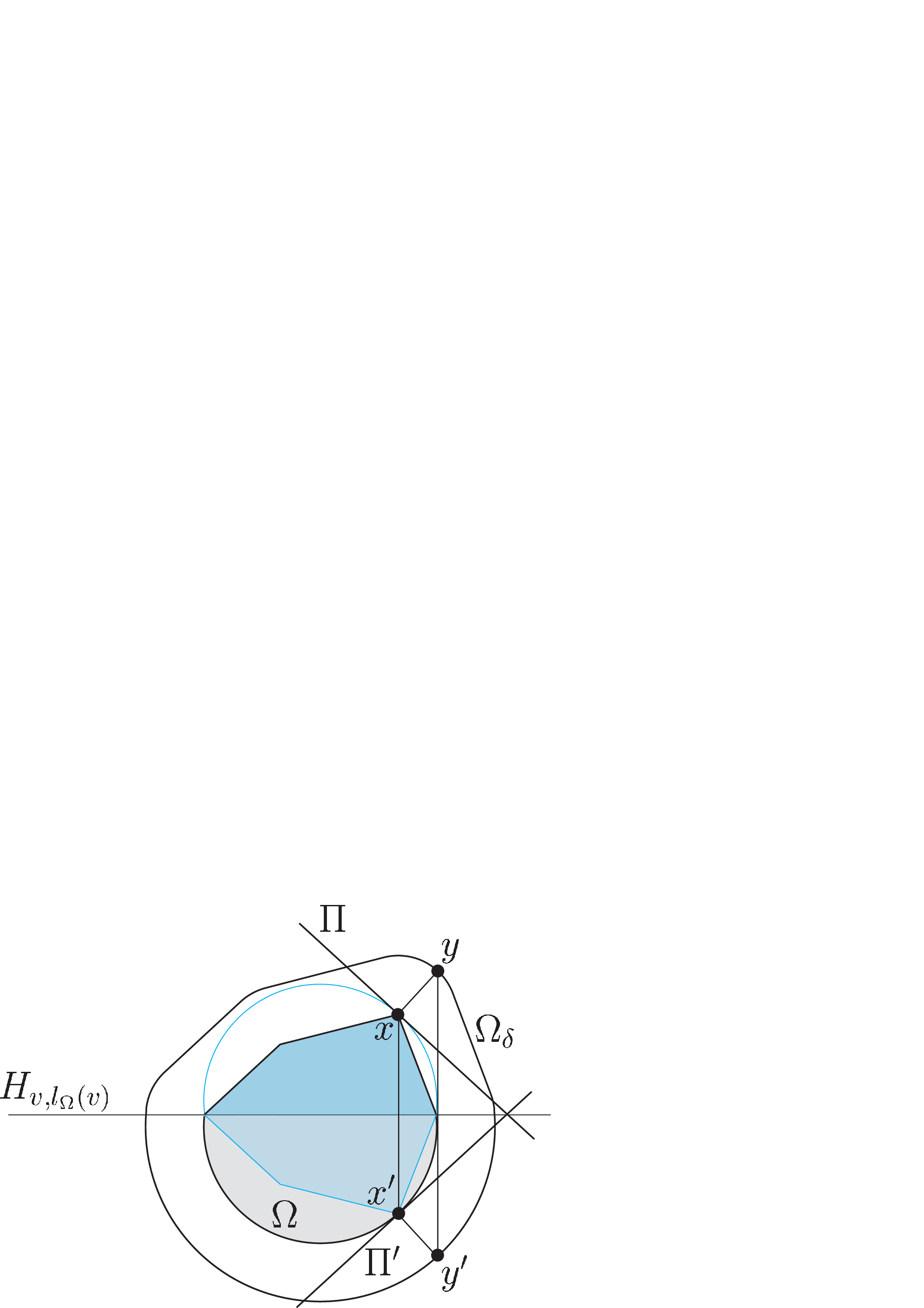}
\caption{}
\label{uf-parallel3}
\end{center}
\end{minipage}
\hskip 0.4cm
\begin{minipage}{.45\linewidth}
\begin{center}
\includegraphics[width=.64\linewidth]{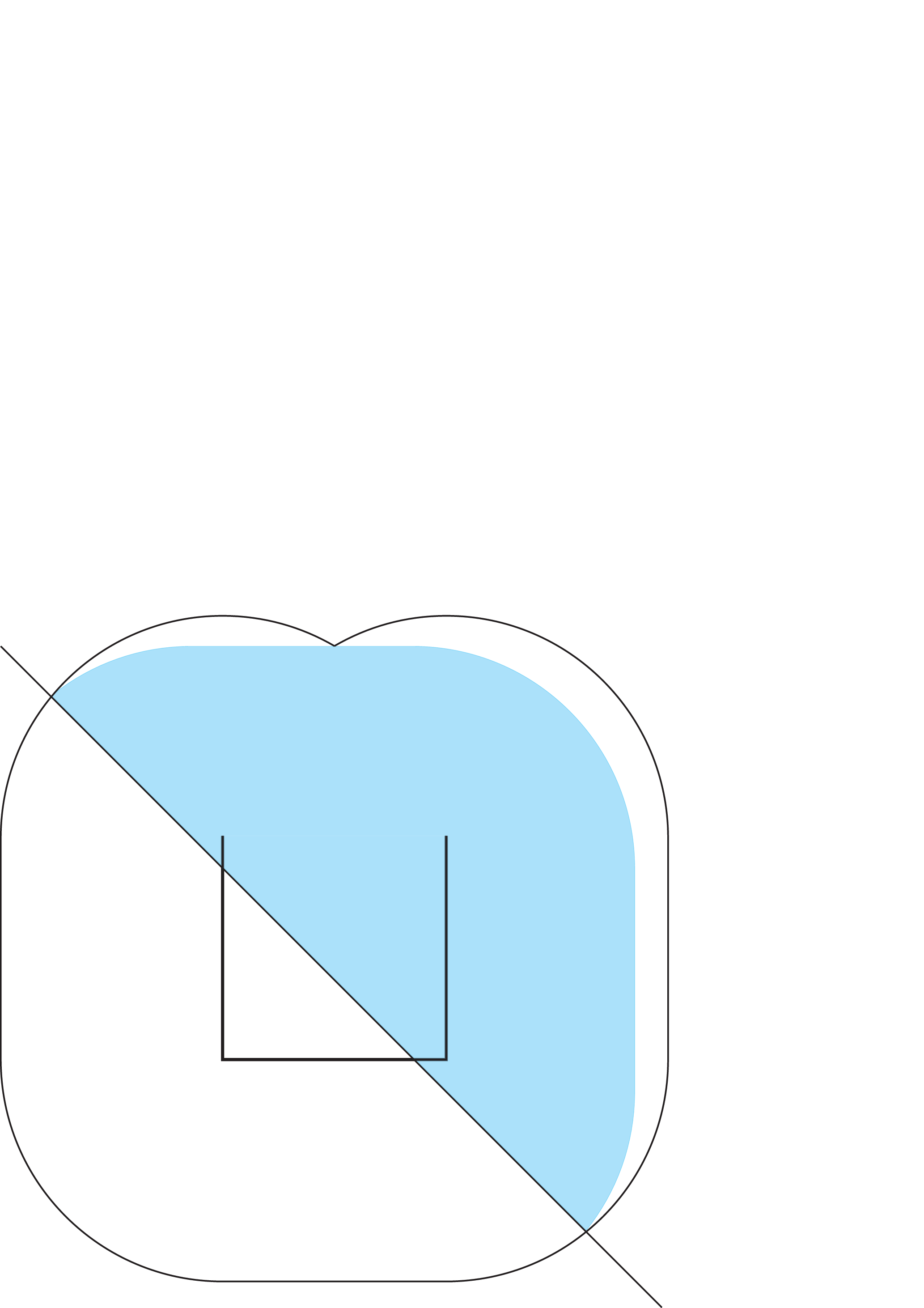}
\caption{$Uf(X_\d)\ne Uf(X)$}
\label{uf-parallel-eg}
\end{center}
\end{minipage}
\end{center}
\end{figure}

\begin{remark}\rm 
On the contrary, if $X$ is not convex, the parallel body could have a strictly smaller minimal unfolded region. 
Let $X=\{0\}\times[0,1]\cup[0,1]\times\{0\}\cup \{1\}\times[0,1]$ and $\d=1$. Then, $Uf(X)=[0,1]\times[0,1/2]$, whereas $Uf(X_\d)$ is much smaller, as is illustrated in Figure \ref{uf-parallel-eg}. 
\end{remark}

Department of Mathematics and Information Sciences, 

Tokyo Metropolitan University, 

1-1 Minami-Ohsawa, Hachiouji-Shi, Tokyo 192-0397, JAPAN. 

E-mail: ohara@tmu.ac.jp

\end{document}